\newtheorem{theorem}{Theorem}[section]
\newtheorem{lemma}[theorem]{Lemma}
\theoremstyle{definition}
\newtheorem{definition}[theorem]{Definition}
\theoremstyle{remark}
\newtheorem{remark}[theorem]{Remark}
\numberwithin{equation}{section}
\begin{document}

\title[Galerkin method  \ldots]{Galerkin method for linear Integral-Algebraic
Equations of index $1$}

\author{B. Shiri}
\address{Faculty of mathematical science, University of Tabriz,
Tabriz - Iran}
\curraddr{Faculty of mathematical science, University of Tabriz,
Tabriz - Iran}
\email{shiri@tabrizu.ac.ir}
\keywords{Galerkin method, Differential Index, Integral-Algebraic Equations.}
\begin{abstract}
In this paper, we study direct and indirect  Galerkin method for solving linear Integral-Algebraic
Equations of index $1.$  Convergence of indirect method is also analyzed.
\end{abstract}
\maketitle
\section{Introduction}
 Recently numerical solution for Integral-Algebraic Equation (IAE)
has been studied by many researchers \cite{1}. In physical models with constrained time or  space variables,
 Algebraic Equations including Integral  or Differential
 Equations arise. Therefore, the investigation of mixed types of equations is important. This can
be seen by observing the number of papers that are about Differential-Algebraic
Equations, Integro Differential Equations, and so on. One of these systems which
mixes Volterra Integral Equations of first and second type is the system of Integral-
Algebraic Equations. However, we don't find any application for this systems except
\cite{6,7,8,9}, but it seems that they be very important in the future. In this paper we
use the standard Galerkin method, for the system of Integral Algebraic Equations
of index 1. We implement the Galerkin method by considering the structure of IAE
of index 1 in two ways, direct and indirect and we analyze indirect method here.
We Consider the system
\begin{equation}\label{s11}
A(t)y(t)+\int_0^tk(t,s)y(s)ds=f(t), \ \ \ \ t\in I:=[0,T],
\end{equation}
where $A\in \mathbf{C}(I,\mathbb{R}^{r\times r}),$ $f\in \mathbf{C}(I,\mathbb{R}^{r})$ and
$k\in \mathbf{C}(\mathbb{D},\mathbb{R}^{r\times r})$ with $\mathbb{D}:=\{(t,s):0\leq s\leq t\leq T\}.$
If $A(t)$ is nonsingular for all $t\in I$ then multiplying (\ref{s11}) by $A.^{-1}$ changes
it to a system of the second kind Volterra Integral Equations that it's theoretical
and numerical analysis have been almost completely investigated \cite{3,4}. Otherwise,
if $A(t)$ is a singular matrix with constant rank for all $t\in I$, then, the system (\ref{s11})  is
an IAE. The classification of this system done by introducing index notation. The
differential index of differential-algebraic equation can be extended to this system.
The following extension is done by Gear in \cite{5}.
\begin{definition} We say the system (\ref{s11}) has index m if, m is the minimum possible
number of differentiating (\ref{s11}) to obtain a system of second kind Volterra Integral
equation.
\end{definition}

For example the following system
\begin{equation}\label{e1a}
x(t)=f_1(t)+\int_0^tk^{11}(t,s)x(s)+k^{12}(t,s)y(s)ds,
\end{equation}
\begin{equation}\label{e1b}
0=f_2(t)+\int_0^tk^{21}(t,s)x(s)+k^{22}(t,s)y(s)ds.
\end{equation}
with the functions $f_i$, $k^{ij}$, \ \ $i,j=1,2$ are sufficiently smooth
and $f_2(0)=0$, $|k^{22}(t,t)|\geq k_0>0$ for all $t\in [0,T];$ (\ref{e1a}) and (\ref{e1b}) is the system of Integral-Algebraic Equations of index 1. For this system the Piecewise Polynomial Collocation
method is investigated by Kauthen in \cite{2}. In this paper we investigate the
Galerkin method for this system.
\section{preliminary}
The Galerkin method is a projection type method \cite{3, 4} that we apply to IAE
of index 1. Let
 $\mathcal{K}:\mathbf{X}\rightarrow\mathbf{X}$ be an operator, where
 $\mathbf{X}$  is a  Hilbert space with orthonormal base
 $\{\phi_i\}_{i=1}^{\infty}$  and $\mathbf{X}_n$ be an $n$-dimensional subspace of X
 generated by $\{\phi_i\}_{i=1}^{n}.$ Then the system (\ref{e1a}) and (\ref{e1b}) changes to
 \begin{equation}\label{e1}
\mathcal{K}x=f.
\end{equation}
For using the Galerkin method's we define the projection $P_n:\mathbf{X}\rightarrow\mathbf{X}_n$  by $P_nx=\sum_{i=1}^n\langle x,\phi_i\rangle \phi_i.$ Then it is clear that $\|P_nx-x\|\rightarrow0$ as $n\rightarrow\infty$. We
search $\mathbf{x}_n\in \mathbf{X}_n$,
$\mathbf{x}_n=\overline{x}\overline{\phi}$ with
$\overline{x}=(x_1,\ldots,x_n)$ and
$\overline{\phi}=(\phi_1,\ldots,\phi_n)^T,$ such that
\begin{equation}\label{e2} P_n\mathcal{K}\mathbf{x}_n=P_nf. \end{equation}
The corresponding operator of the system (\ref{s11}) is defined by
\begin{equation}\label{e6}
\mathcal{K}([x,y])=\left (x-\int_0^tk^{11}(t,s)x(s)+k^{12}(t,s)y(s)ds, \
-\int_0^tk^{21}(t,s)x(s)+k^{22}(t,s)y(s)ds\right)^T,
\end{equation}
and the assumption that $f=(f_1,f_2)^T$, $\mathbf{X}\subseteq(\mathcal{L}^2[0,T])$ and
the functions $f_i$, $k^{ij}$, \ \ $i,j=1,2$ are sufficiently smooth
and $f_2(0)=0$, $|k^{22}(t,t)|\geq k_0>0$ for all $t\in [0,T],$  guarantee us that
 $\mathcal{K}x=f$ has a continuous solution as it follows. We
first differentiate equation (\ref{e1b}) w. r. to t and we have
\begin{equation}
\begin{split} \notag
0=&f^{'}_2(t)+k^{21}(t,t)x(t)+k^{22}(t,t)y(t)\\
&+\int_0^t \frac{\partial k^{21}(t,s)}{\partial t}x(s)+\frac{\partial k^{22}(t,s)}{\partial t} y(s)ds.
\end{split}
\end{equation}
Because $|k^{22}(t,t)|\geq k_0>0$ we obtain
\begin{equation}
\begin{split}\label{e1c}
y(t)=&-\frac{f^{'}_2(t)}{k^{22}(t,t)}-\frac{k^{21}(t,t)}{k^{22}(t,t)}x(t)\\
&-\int_0^t\frac{1}{k^{22}(t,t)}
\frac{\partial k^{21}(t,s)}{\partial
t}x(s)-\frac{1}{k^{22}(t,t)}\frac{\partial k^{22}(t,s)}{\partial t}
y(s)ds,
\end{split}
\end{equation}
as a Volterra integral equations of the second kind for $y(t)$. Now it is easy to show that
this system has a unique solution (see \cite{4}).
%&&&&&&&&&&&&&&&&&&&&&&&&&&&&&&&&&&&&&&&&&&&&&&&&&&&&&&&&&&&&&&&&&&&&&&&&&&&&&&&
\section{Implementation and Convergence}
We can implement the Galerkin method for two systems
(\ref{e1a}),(\ref{e1b}) and (\ref{e1a}),(\ref{e1c}). For the former the Galerkin method  is called direct and for the further, is called indirect method.  For the second system, the convergence and order of convergence can be obtained by similar arguments as discussed \cite{6} and \cite{3}. In \cite{3} the order of
convergence  has been explained for Fredholm
integral equation. Since,  by using  suitable kernel,  each
Volterra integral equation can be changed to a Fredholm integral equation,
 Galerkin method for both are the same. Hence the convergence order of the
approximate solution is equal to the convergence order
of the best approximation in space $X_n$. This is confirmed by numerical examples. Let $\{V_i\}_{i=1}^\infty$ be Legendre polynomials as
orthonormal base for $\mathcal{L}^2[-1,1]$. For $\mathcal{L}^2[0,T]$
we use shifted Legendre polynomials $\{V_{T_i}\}_{i=1}^{\infty}$,
$V_{T_i}(s)=\sqrt{\frac{2}{T}}V_i(\frac{2*s-T}{T})$, $s\in [0,T]$ and
for $(\mathcal{L}^2[0,T])^2,$ we use
$\{(V_{T_i},0),(0,V_{T_i})\}_{i=1}^\infty$ with
$\langle(x,y),(z,t)\rangle=\langle x,z\rangle_2+\langle y,t\rangle_2$ and
$\langle x,y\rangle_2=\int_0^Tx\overline{y}dt$. Also we suppose
$P_n([x,y])=\left(\sum_{i=1}^nx_iV_{T_i},\sum_{i=1}^ny_iV_{T_i}\right).$
For equations (\ref{e1a}), (\ref{e1b})  and (\ref{e1c})  we have.
\begin{equation}\label{e2a}
\begin{split}
\sum_{i=1}^nx_iV_{T_i}(t)=&f_1(t)+\sum_{i=1}^nx_i\int_0^tk^{11}(t,s)V_{T_i}(s)ds\\
&+\sum_{i=1}^ny_i\int_0^tk^{12}(t,s)V_{T_i}(s)ds,
\end{split}
\end{equation}
\begin{equation}\label{e2b}
0=f_2(t)+\sum_{i=1}^nx_i\int_0^tk^{21}(t,s)V_{T_i}(s)ds+\sum_{i=1}^ny_i\int_0^tk^{22}(t,s)V_{T_i}(s)ds,
\end{equation}
\begin{equation}\label{e2c}
\begin{split}
\sum_{i=1}^ny_iV_{T_i}(t)=&-\frac{f^{'}_2(t)}{k^{22}(t,t)}-\frac{k^{21}(t,t)}{k^{22}(t,t)}\sum_{i=1}^nx_iV_{T_i}(t)\\
&-\sum_{i=1}^nx_i \int_0^t\frac{1}{k^{22}(t,t)} \frac{\partial
k^{21}(t,s)}{\partial t}V_{T_i}(s)ds\\
&-\sum_{i=1}^ny_i
\int_0^t\frac{1}{k^{22}(t,t)}\frac{\partial k^{22}(t,s)}{\partial
t}V_{T_i}(s)ds.
\end{split}
\end{equation}
Now we can use (\ref{e2a}),(\ref{e2b}) for the first system and
(\ref{e2a}),(\ref{e2c}) for the second system introduced above. It is
important for analysis to note that  formula (\ref{e2c}) is
derivative of (\ref{e2b}). Now if we multiply equations
(\ref{e2a})-(\ref{e2c}) to $V_{T_j}(t)$ and integrate on the $[0, T]$, we
have obtained
\begin{equation}\label{e3a}
\begin{split}
x_j=&\int_0^Tf_1(t)V_{T_j}(t)dt+\sum_{i=1}^nx_i\int_0^T\int_0^tk^{11}(t,s)V_{T_i}(s)V_{T_j}(t)dsdt\\
&+\sum_{i=1}^ny_i\int_0^T\int_0^tk^{12}(t,s)V_{T_i}(s)V_{T_j}(t)dsdt,
\end{split}
\end{equation}
\begin{equation}\label{e3b}
\begin{split}
0=&\int_0^Tf_2(t)V_{T_j}(t)dt+\sum_{i=1}^nx_i\int_0^T\int_0^tk^{21}(t,s)V_{T_i}(s)V_{T_j}(t)dsdt \\
&+\sum_{i=1}^ny_i\int_0^T\int_0^tk^{22}(t,s)V_{T_i}(s)V_{T_j}(t)dsdt,
\end{split}
\end{equation}
\begin{equation}\label{e3c}
\begin{split}
y_j=&-\int_0^T\frac{f^{'}_2(t)}{k^{22}(t,t)}dt-\sum_{i=1}^nx_i\int_0^T\int_0^t\frac{1}{k^{22}(t,t)}\frac{\partial k^{21}(t,s)}{\partial t}V_{T_i}(s)V_{T_j}(t)dsdt\\
&-\sum_{i=1}^nx_i\int_0^T\frac{k^{21}(t,t)}{k^{22}(t,t)}V_{T_i}(t)V_{T_j}(t)dt\\
&-\sum_{i=1}^ny_i
\int_0^T\int_0^t\frac{1}{k^{22}(t,t)}\frac{\partial
k^{22}(t,s)}{\partial t}V_{T_i}(s)V_{T_j}(t)dsdt.
\end{split}
\end{equation}
The direct and indirect  Galerkin method  for system (\ref{e1a}),(\ref{e1b}) are solving  the linear systems (\ref{e3a}),(\ref{e3b}) and
(\ref{e3a}),(\ref{e3c}) respectively.
\begin{lemma}\label{s21}
 \cite{4} Let $V$ and $W$ be normed spaces, with $W$ complete. Let
$K\in\mathcal{L}(V,W)$, let $\{K_n\}$ be a sequence of compact operators in $\mathcal{L}(V,W),$
and assume $K_n\rightarrow K$ in $\mathcal{L}(V,W).$ Then K is compact.
  \end{lemma}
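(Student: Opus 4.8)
The plan is to establish that the limit operator $K$ is compact by exploiting the standard fact that a norm-limit of compact operators into a complete space is compact. Since the excerpt cites this as Lemma from \cite{4}, the cleanest route is to reprove the essential mechanism directly: I would show that the image under $K$ of any bounded sequence admits a convergent subsequence, leveraging the compactness of each $K_n$ together with the uniform approximation $K_n \to K$ in operator norm.

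First I would fix a bounded sequence $\{v_m\}$ in $V$ with $\|v_m\| \le M$ for all $m$. Because $K_1$ is compact, the sequence $\{K_1 v_m\}$ has a convergent subsequence; extracting successively for each $K_n$ and passing to a diagonal subsequence $\{v_{m_j}\}$, I obtain one subsequence along which $\{K_n v_{m_j}\}_j$ converges in $W$ for every fixed $n$. The goal is then to prove that $\{K v_{m_j}\}_j$ is Cauchy in $W$, which combined with completeness of $W$ yields convergence and hence compactness of $K$.

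The main estimate is a three-term splitting: for indices $j, l$ and any fixed $n$,
\begin{equation}\notag
\|K v_{m_j} - K v_{m_l}\| \le \|(K - K_n) v_{m_j}\| + \|K_n v_{m_j} - K_n v_{m_l}\| + \|(K_n - K) v_{m_l}\|.
\end{equation}
Given $\varepsilon > 0$, I would first choose $n$ large enough that $\|K - K_n\| < \varepsilon/(3M)$, which bounds the outer two terms by $\varepsilon/3$ each using $\|v_{m_j}\| \le M$. With this $n$ now fixed, the convergence of $\{K_n v_{m_j}\}_j$ makes the middle term smaller than $\varepsilon/3$ for all large $j, l$. Hence $\|K v_{m_j} - K v_{m_l}\| < \varepsilon$ eventually, so $\{K v_{m_j}\}_j$ is Cauchy and therefore convergent in the complete space $W$.

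The step I expect to be the main obstacle is the diagonal extraction producing a single subsequence that works simultaneously for all $K_n$; care is needed to ensure the diagonal sequence is eventually a subsequence of each stage so that $\{K_n v_{m_j}\}_j$ genuinely converges for every $n$. Everything else is the routine $\varepsilon/3$ argument, whose only subtlety is the correct order of quantifiers: $n$ must be selected \emph{before} invoking the middle-term convergence, since that convergence depends on the now-fixed $n$. Once compactness is phrased as sequential compactness of images of bounded sets, the completeness hypothesis on $W$ enters exactly at the final Cauchy-to-convergent passage, which is where its necessity becomes transparent.
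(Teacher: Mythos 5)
Your argument is correct and complete. Note first that the paper itself offers no proof of this lemma: it is quoted verbatim from the cited textbook \cite{4}, so there is nothing internal to compare against. Your sequential proof --- diagonal extraction of a subsequence $\{v_{m_j}\}$ along which every $\{K_n v_{m_j}\}_j$ converges, followed by the three-term $\varepsilon/3$ splitting and the Cauchy-to-convergent passage in the complete space $W$ --- is sound, and you handle the one genuinely delicate point (fixing $n$ \emph{before} invoking convergence of the middle term) correctly. Strictly speaking the full diagonal argument is slightly more than you need: since the choice of $n$ in the splitting depends only on $\varepsilon$, it suffices to extract a subsequence adapted to a single $K_n$ for each stage of a $1/k$-refinement, which collapses to the same diagonal anyway. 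A marginally shorter alternative, and the one usually given in \cite{4}, avoids sequences altogether: for $\varepsilon>0$ pick $n$ with $\|K-K_n\|<\varepsilon/2$; a finite $\varepsilon/2$-net for the totally bounded set $K_n(B)$ ($B$ the unit ball) is then an $\varepsilon$-net for $K(B)$, so $K(B)$ is totally bounded and, by completeness of $W$, relatively compact. Both routes use completeness in exactly the same place, and yours is perfectly acceptable.
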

   \begin{lemma}\label{s22} \cite{4} Let $V$ be a Banach space, and let $\{P_n\}$ be a family of
bounded projections on $V$ with
$$P_nu \rightarrow u \ \ \ \mbox{as} \ \ \ n\rightarrow \infty , u\in V$$
If $K:V\rightarrow V$ is compact, then
$$\|K-P_nK\|\rightarrow 0 \ \ \ \mbox{as} \ \ \ \ n\rightarrow \infty .$$
\end{lemma}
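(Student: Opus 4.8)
The plan is to establish the operator-norm convergence by reducing it to uniform convergence on a compact set and then exploiting the pointwise hypothesis on a finite net. First I would rewrite $K - P_nK = (I - P_n)K$, so that
$$\|K - P_nK\| = \sup_{\|u\| \le 1} \|(I - P_n)Ku\|.$$
Since $K$ is compact, the image $K(\overline{B})$ of the closed unit ball $\overline{B} = \{u \in V : \|u\| \le 1\}$ is relatively compact; let $S$ denote its closure. The claim is then equivalent to showing that $(I - P_n) \to 0$ uniformly on $S$, that is $\sup_{v \in S} \|(I - P_n)v\| \to 0$.

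The key preliminary step is a uniform bound on the projections. By hypothesis $P_nu \to u$ for every $u \in V$, so for each fixed $u$ the sequence $\{P_nu\}_n$ is bounded. Because $V$ is complete, the Banach--Steinhaus (uniform boundedness) principle yields a finite constant $M := \sup_n \|P_n\| < \infty$, whence $\|I - P_n\| \le 1 + M$ for all $n$. I expect this to be the crux of the argument: without the uniform bound the error at points far from the net cannot be controlled, and it is exactly here that completeness of $V$ is used.

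With the uniform bound in hand, I would run a finite $\epsilon$-net argument. Fix $\epsilon > 0$. By compactness of $S$, choose $v_1, \ldots, v_m \in S$ such that every $v \in S$ lies within $\epsilon$ of some $v_j$. The pointwise convergence gives an index $N$ with $\|(I - P_n)v_j\| < \epsilon$ for all $n \ge N$ and all $j = 1, \ldots, m$. For arbitrary $v \in S$, pick $v_j$ with $\|v - v_j\| < \epsilon$ and estimate
$$\|(I - P_n)v\| \le \|(I - P_n)(v - v_j)\| + \|(I - P_n)v_j\| \le (1 + M)\epsilon + \epsilon.$$
Taking the supremum over $v \in S$ gives $\sup_{v \in S}\|(I - P_n)v\| \le (2 + M)\epsilon$ for $n \ge N$, and since $\epsilon > 0$ is arbitrary this proves $\|K - P_nK\| \to 0$. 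Note that Lemma \ref{s21} is not needed here, as compactness of $K$ is assumed outright, so the only nontrivial input beyond elementary triangle-inequality estimates is the uniform boundedness principle.
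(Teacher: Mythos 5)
Your proof is correct and is essentially the standard argument for this result: the paper gives no proof of its own (it simply cites Atkinson--Han \cite{4}), and the proof there proceeds exactly as you do, via the uniform boundedness principle to get $\sup_n\|P_n\|<\infty$ and a finite $\epsilon$-net in the compact closure of $K(\overline{B})$. No gaps; your remark that Lemma \ref{s21} is not needed here is also accurate.
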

\begin{theorem}\label{s23} \cite{4}
Assume $K:V\rightarrow V$ is bounded, with V a Banach space;
and assume $\lambda-K:V\rightarrow V$ is one to one and  surjective.
Further assume
$$\|K-P_nK\|\rightarrow 0 \ \mbox{as} \  n \rightarrow \infty.$$
Then for all sufficiently large $n$, say, $n\geq N,$ the operator $(\lambda-K)^{-1}$
exists as a bounded operator from $V$ to $V.$ Moreover, it is uniformly
bounded:
$$\sup_{n\geq N} \|(\lambda-P_nK)^{-1}\|\leq \infty.$$
For the solutions $u_n$ (n sufficiently large)  of $P_n(\lambda-K)u_n=P_nF$ and the solutions $u$  of $(\lambda-K)u=F,$
we have
$$u-u_n=\lambda(\lambda-P_nK)^{-1}(u-P_nu)$$
and the two-sided error estimate
$$\frac{|\lambda|}{\|\lambda-P_nK\|}\|u-P_nu\|\leq \|u-u_n\|\leq |\lambda|\left\|(\lambda-P_nK)^{-1}\right \| \|u-P_nu\|.$$
\end{theorem}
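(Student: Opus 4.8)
The plan is to prove this by a perturbation argument built on the Neumann series, treating $\lambda-P_nK$ as a small perturbation of the invertible operator $\lambda-K$. First I would observe that since $\lambda-K$ is a bounded bijection of the Banach space $V$ onto itself, the open mapping theorem guarantees that $(\lambda-K)^{-1}\in\mathcal{L}(V,V)$ is bounded. The key algebraic step is the factorization
\begin{equation}\notag
\lambda-P_nK=(\lambda-K)+(K-P_nK)=(\lambda-K)\bigl[I+(\lambda-K)^{-1}(K-P_nK)\bigr].
\end{equation}
Setting $B_n=(\lambda-K)^{-1}(K-P_nK)$, I would estimate $\|B_n\|\leq\|(\lambda-K)^{-1}\|\,\|K-P_nK\|$, which tends to $0$ by the standing hypothesis $\|K-P_nK\|\to0$. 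Hence there is an $N$ such that $\|B_n\|\leq\frac12$ for all $n\geq N$, and the Neumann series then gives the invertibility of $I+B_n$ with $\|(I+B_n)^{-1}\|\leq(1-\|B_n\|)^{-1}\leq2$. Consequently $(\lambda-P_nK)^{-1}=(I+B_n)^{-1}(\lambda-K)^{-1}$ exists as a bounded operator and satisfies $\|(\lambda-P_nK)^{-1}\|\leq2\|(\lambda-K)^{-1}\|$, a bound independent of $n$, which yields the claimed uniform boundedness.

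Next I would derive the error representation. Applying $P_n$ to the exact equation $(\lambda-K)u=F$ gives $P_nF=\lambda P_nu-P_nKu$, while the approximate solution $u_n$ lies in the range of $P_n$, so $P_nu_n=u_n$ and the defining relation $P_n(\lambda-K)u_n=P_nF$ becomes $(\lambda-P_nK)u_n=P_nF$. Subtracting, I would compute
\begin{equation}\notag
(\lambda-P_nK)(u-u_n)=\lambda u-P_nKu-P_nF=\lambda u-P_nKu-(\lambda P_nu-P_nKu)=\lambda(u-P_nu),
\end{equation}
and then apply the bounded inverse from the previous step to obtain the error formula $u-u_n=\lambda(\lambda-P_nK)^{-1}(u-P_nu)$.

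Finally, the two-sided estimate follows by taking norms in the two equivalent identities. The upper bound is immediate from the error formula: $\|u-u_n\|\leq|\lambda|\,\|(\lambda-P_nK)^{-1}\|\,\|u-P_nu\|$. For the lower bound I would take norms in $(\lambda-P_nK)(u-u_n)=\lambda(u-P_nu)$, giving $|\lambda|\,\|u-P_nu\|\leq\|\lambda-P_nK\|\,\|u-u_n\|$, and rearrange. I do not expect a genuine obstacle here, since this is a classical projection-method result; the only point requiring care is ensuring the perturbation is small enough for the Neumann series to converge, which is exactly where the hypothesis $\|K-P_nK\|\to0$ is used, and recalling that this hypothesis is supplied in the present setting by Lemma \ref{s22} together with the compactness furnished by Lemma \ref{s21}.
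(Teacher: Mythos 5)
Your argument is correct and is essentially the standard proof from the cited reference [4] (Atkinson--Han); the paper itself offers no proof of this theorem, simply quoting it with the citation, so there is nothing in the paper to diverge from. The Neumann-series perturbation of $\lambda-K$, the identity $(\lambda-P_nK)(u-u_n)=\lambda(u-P_nu)$, and the two-sided bound are all exactly the classical route, and each step checks out.
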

We have following Theorem for indirect method.
\begin{theorem}
Let for the system (\ref{e1a}) and (\ref{e1b}) the following conditions are satisfied:
\begin{enumerate}
          \item $f_i\in C^1(0,T), \ K^{i,j}[t,s]\in C^1(\mathbb{D}),$ for all $i,j=\{1,2\},$
          \item $k^{22}(t,t)\geq k_0>0,$
        \end{enumerate}
   where $\mathbb{D}:=\{(t,s): 0\leq s\leq t\leq T\}.$ Then  the convergence order of the
approximate solution is equal to the convergence order
of the best approximation in the space $X_n=\{V_{T_i}\}_{i=1}^\infty.$
\end{theorem}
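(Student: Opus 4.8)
The plan is to recast the indirect system \eqref{e1a},\eqref{e1c} as a second-kind operator equation on the Banach space $V=(\mathcal{L}^2[0,T])^2$ and then bring it under the hypotheses of Theorem~\ref{s23}. Writing $u=(x,y)$, I would collect the inhomogeneous terms into $F=\bigl(f_1,\,-\tfrac{f_2'}{k^{22}(t,t)}\bigr)$ and rewrite the two scalar equations as
\[
u-Mu-\mathcal{K}u=F,
\]
where $M$ is the (bounded) multiplication operator producing only the pointwise term $-\tfrac{k^{21}(t,t)}{k^{22}(t,t)}x(t)$ in the second component, and $\mathcal{K}$ gathers every Volterra integral term of \eqref{e1a} and \eqref{e1c}. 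Hypotheses (1)--(2) are exactly what is needed for $F$ to be well defined and for the coefficients $\tfrac{1}{k^{22}(t,t)}\tfrac{\partial k^{2j}}{\partial t}$ and $\tfrac{k^{21}(t,t)}{k^{22}(t,t)}$ to be continuous, hence bounded, on $I$.

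The difficulty is that $M$ is bounded but \emph{not} compact, so $I-M-\mathcal{K}$ is not of the form $\lambda-(\text{compact})$ demanded by Theorem~\ref{s23}; indeed $\|M-P_nM\|\not\to0$, so the projection theory cannot be applied to $I-M-\mathcal{K}$ directly. I would therefore eliminate $M$ first. Because $M$ feeds $x$ only into the $y$-component, it is strictly lower triangular and nilpotent ($M^2=0$), so $I-M$ is boundedly invertible with $(I-M)^{-1}=I+M$. Multiplying through gives
\[
(I-K)u=\widetilde{F},\qquad K:=(I-M)^{-1}\mathcal{K},\quad \widetilde{F}:=(I-M)^{-1}F,
\]
a genuine second-kind equation with $\lambda=1$.

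Next I would establish the two structural facts needed by Theorem~\ref{s23}. First, $K$ is compact: $\mathcal{K}$ is a matrix Volterra integral operator whose kernels are continuous on $\mathbb{D}$ (again by (1)--(2)); extending each kernel by zero for $s>t$ exhibits it as Hilbert--Schmidt on $(\mathcal{L}^2[0,T])^2$, hence compact, and one may alternatively approximate the kernels uniformly by degenerate ones and invoke Lemma~\ref{s21}. Since $(I-M)^{-1}$ is bounded, $K$ is compact as a composition of a bounded and a compact operator. Second, $I-K$ is one-to-one and onto: the preliminary section shows the indirect system has a unique continuous solution, so $I-K$ is injective on $V$, and compactness of $K$ upgrades injectivity to bijectivity with bounded inverse via the Fredholm alternative.

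With this in place the conclusion follows mechanically. Since $P_n$ is the orthogonal projection onto $X_n$ we have $P_nu\to u$ for all $u\in V$, and compactness of $K$ gives $\|K-P_nK\|\to0$ by Lemma~\ref{s22}. Theorem~\ref{s23} with $\lambda=1$ then yields, for $n\ge N$, solvability of the projected equations and the two-sided estimate
\[
\frac{1}{\|I-P_nK\|}\,\|u-P_nu\|\le\|u-u_n\|\le\bigl\|(I-P_nK)^{-1}\bigr\|\,\|u-P_nu\|,
\]
in which $\|I-P_nK\|$ is bounded above (as $\|K-P_nK\|\to0$) and $\|(I-P_nK)^{-1}\|$ is uniformly bounded; hence $\|u-u_n\|$ is squeezed between fixed multiples of $\|u-P_nu\|$, which, $P_n$ being the best-$\mathcal{L}^2$-approximation projection, is exactly the best approximation error in $X_n$. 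The claimed equality of convergence orders follows. The main obstacle I anticipate is precisely the reduction step: isolating the non-compact multiplication $M$ and verifying that the indirect Galerkin system \eqref{e3a},\eqref{e3c} — which projects the $M$-term through $P_n$ — is consistent with the absorbed second-kind formulation; here one exploits that $P_nM$ is itself nilpotent (its first component vanishes, so $(P_nM)^2=0$), which lets the discrete elimination mirror the continuous one and keeps the analyzed scheme identified with the method of \eqref{e3a},\eqref{e3c}.
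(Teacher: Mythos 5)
Your proposal is correct and follows essentially the same route as the paper: your elimination of the multiplication term via $(I-M)^{-1}=I+M$ reproduces exactly the paper's combined kernels $K^{21},K^{22}$ and right-hand side $F_2$ in the Fredholm reformulation $(I-K)u=F$ on $(\mathcal{L}^2[0,T])^2$, after which both arguments establish compactness of $K$ (the paper via continuous approximating kernels and Lemma~\ref{s21}, you via a Hilbert--Schmidt or degenerate-kernel argument), invertibility of $I-K$, and then apply Lemma~\ref{s22} and Theorem~\ref{s23}. The consistency issue you flag between the Galerkin discretization of the reformulated system and the scheme \eqref{e3a},\eqref{e3c} is also present in the paper, which simply asserts the equivalence.
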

\begin{proof}
 The space $\mathcal{L}^2[0,T]$ with $\mathcal{L}^2$ Norm is Banach space, hence the Cartesian product on this space, i.e., $V=(\mathcal{L}^2[0,T])^2,$ with introduced norm generate also Banach space.
Now we change  the system  (\ref{e1a}) and (\ref{e1c})  to the following  system
\begin{equation}\label{INF}
(I-K)x=F, \ \ F=[F_1,F_2]^T, \ \ x=[x_1, x_2]^T,
\end{equation}
with
$$F_1(t)=f_1(t), \ \ \ \ \ \ F_2(t)=-\frac{f^{'}_2(t)}{k^{22}(t,t)}-\frac{k^{21}(t,t)}{k^{22}(t,t)}f_1(t),$$
$$K^{11}(t,s)=\left \{ \begin{array}{lc}
                    k^{11}(t,s), & s\leq t, \\
                    0, & s> t,
                  \end{array} \right. \ \  K^{12}(t,s)=\left\{ \begin{array}{lc}
                    k^{12}(t,s), & s\leq t, \\
                    0, & s> t,
                  \end{array} \right. \ \ $$
$$K^{21}(t,s)=\left \{ \begin{array}{lc}
                    -\frac{k^{21}(t,t)}{k^{22}(t,t)}k^{11}(t,s)-\frac{1}{k^{22}(t,t)}\frac{\partial k^{21}(t,s)}{\partial t}, & s\leq t, \\
                    0, & s> t,
                  \end{array}
\right.$$
$$ K^{22}(t,s)=\left\{\begin{array}{lc}
                -\frac{k^{21}(t,t)}{k^{22}(t,t)}k^{12}(t,s)-\frac{1}{k^{22}(t,t)}\frac{\partial k^{22}(t,s)}{\partial t}, & s\leq t, \\
                0. & s> t,
              \end{array}\right.
$$
and with the integral operator
$$Kx=\int_0^T\left[
      \begin{array}{cc}
        K^{11}(t,s) & K^{12}(t,s) \\
        K^{21}(t,s) & K^{22}(t,s) \\
      \end{array}
    \right]\left[
             \begin{array}{c}
               x_1(s) \\
               x_2(s) \\
             \end{array}
           \right]ds.
$$
By introducing
  $$K_nx:=\int_0^T\left[
      \begin{array}{cc}
        K^{11}_n(t,s) & K^{12}_n(t,s) \\
        K^{21}_n(t,s) & K^{22}_n(t,s) \\
      \end{array}
    \right]\left[
             \begin{array}{c}
               x_1(s) \\
               x_2(s) \\
             \end{array}
           \right]ds,
$$ with continuous kernels
$$K^{11}_n(t,s)=\left \{ \begin{array}{lc}
                    k^{11}(t,s), & s\leq t, \\
                    -n(k^{11}(t,t))((t+\frac{1}{n})-s), & s\in [t, t+\frac{1}{n}],\\
                0, & s> t+\frac{1}{n},
                  \end{array} \right. $$
                  $$  K^{12}_n(t,s)=\left\{ \begin{array}{lc}
                    k^{12}_n(t,s), & s\leq t \\
                     -n(k^{12}(t,t))((t+\frac{1}{n})-s), & s\in [t, t+\frac{1}{n}],\\
                0, & s> t+\frac{1}{n},
                  \end{array} \right. \ \ $$
$$K^{21}_n(t,s)=\left \{ \begin{array}{lc}
                    -\frac{k^{21}(t,t)}{k^{22}(t,t)}k^{11}(t,s)-\frac{1}{k^{22}(t,t)}\frac{\partial k^{21}(t,s)}{\partial t}, & s\leq t, \\
                      -n(\frac{k^{21}(t,t)}{k^{22}(t,t)}k^{11}(t,t)+\frac{1}{k^{22}(t,t)}\frac{\partial k^{21}(t,t)}{\partial t})((t+\frac{1}{n})-s), & s\in [t, t+\frac{1}{n}],\\
                    0, & s> t+\frac{1}{n},
                  \end{array}
\right.$$
$$ K^{22}_n(t,s)=\left\{\begin{array}{lc}
                -\frac{k^{21}(t,t)}{k^{22}(t,t)}k^{12}(t,s)-\frac{1}{k^{22}(t,t)}\frac{\partial k^{22}(t,s)}{\partial t}, & s\leq t, \\
                -n(\frac{k^{21}(t,t)}{k^{22}(t,t)}k^{12}(t,t)+\frac{1}{k^{22}(t,t)}\frac{\partial k^{22}(t,t)}{\partial t})((t+\frac{1}{n})-s), & s\in [t, t+\frac{1}{n}],\\
                0, &  s> t+\frac{1}{n}.
              \end{array}\right.
$$
which is compact  and using  lemma \ref{s21}, imply  K is compact.

   Applying the Galerkin method to the Fredholm  system (\ref{INF}) leads to the linear system equivalent with the system (\ref{e3a}),(\ref{e3c}). So it is enough to show that for this system the convergence order of the
approximate solution by the Galerkin method is equal to the convergence order
of the best approximation.  Since, the system (\ref{INF}) also is a volterra system, using the same line of \cite{4}, $(I-K)$ is invertible so is surjective. The Lemma \ref{s22}
 provide the remainder  condition.
Finally all  conditions of the  theorem \ref{s23} are satisfied  and this completes the proof.
\end{proof}
\section{Numerical results}
\begin{table}.
\caption{ Numerical result obtained from
(\ref{e3a}),(\ref{e3b})}\label{T5}
 \[
\begin{array}{|c|c|c|c|c|c|}
\hline
 n     & 2       & 4                       & 6                        & 8             & 10           \\ \hline
 \|x_n-x\| & 4.0e-002 & 3.0e-004 &7.4e-007 &    9.4e-010    & 7.6e-013\\ \hline
 \|y_n-y\| & 1.6e-001 & 2.2e-003 &8.1e-006 &    1.4e-008    & 1.4e-011 \\ \hline
\end{array}
\]
\end{table}
\begin{table}.
\caption{Numerical result obtained from (\ref{e3a}),(\ref{e3c})
}\label{T6}
\[
\begin{array}{|c|c|c|c|c|c|} \hline

 n     & 2       & 4                       & 6                        & 8       & 10                 \\ \hline
 \|x_n-x\| &  3.8e-002 & 2.6e-004 & 6.7e-007   &  8.8e-010 &  6.8e-013  \\ \hline
 \|y_n-y\| & 7.3e-002  &  5.1e-004    & 1.3e-006  & 1.7e-009 & 1.3e-012 \\ \hline
\end{array}
\]
\end{table}
\begin{table}.
\caption{Logarithm of the error by best approximation using Legendre
polynomial) }\label{T7}
\[
\begin{array}{|c|c|c|c|c|c|} \hline
 n     & 2       & 4                       & 6                        & 8       & 10                 \\ \hline
 \|P_n\sin(t)-\sin(t)\| &  5.1e-002 & 3.3e-004 & 8.1e-007   &  1.0e-009 &  8.0e-013  \\ \hline
 \|P_n\cos(t)-\cos(t)\| & 7.8e-002  &  5.4e-004    & 1.4e-006  & 1.7e-009 & 1.3e-012 \\ \hline
\end{array}
\]
\end{table}
\begin{figure}
\includegraphics [width=2.4in,height=2.5in,]{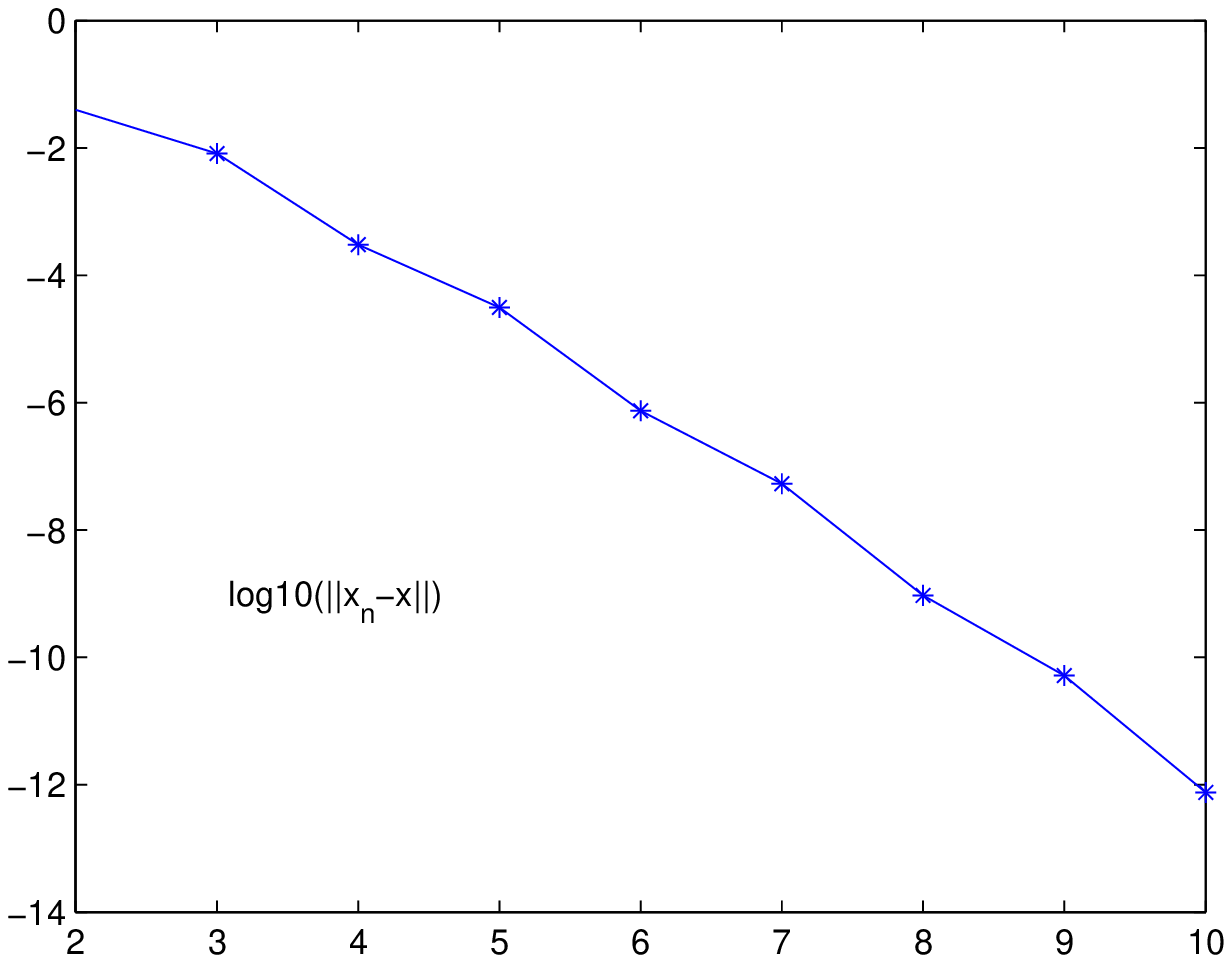}
\includegraphics [width=2.4in,height=2.5in,]{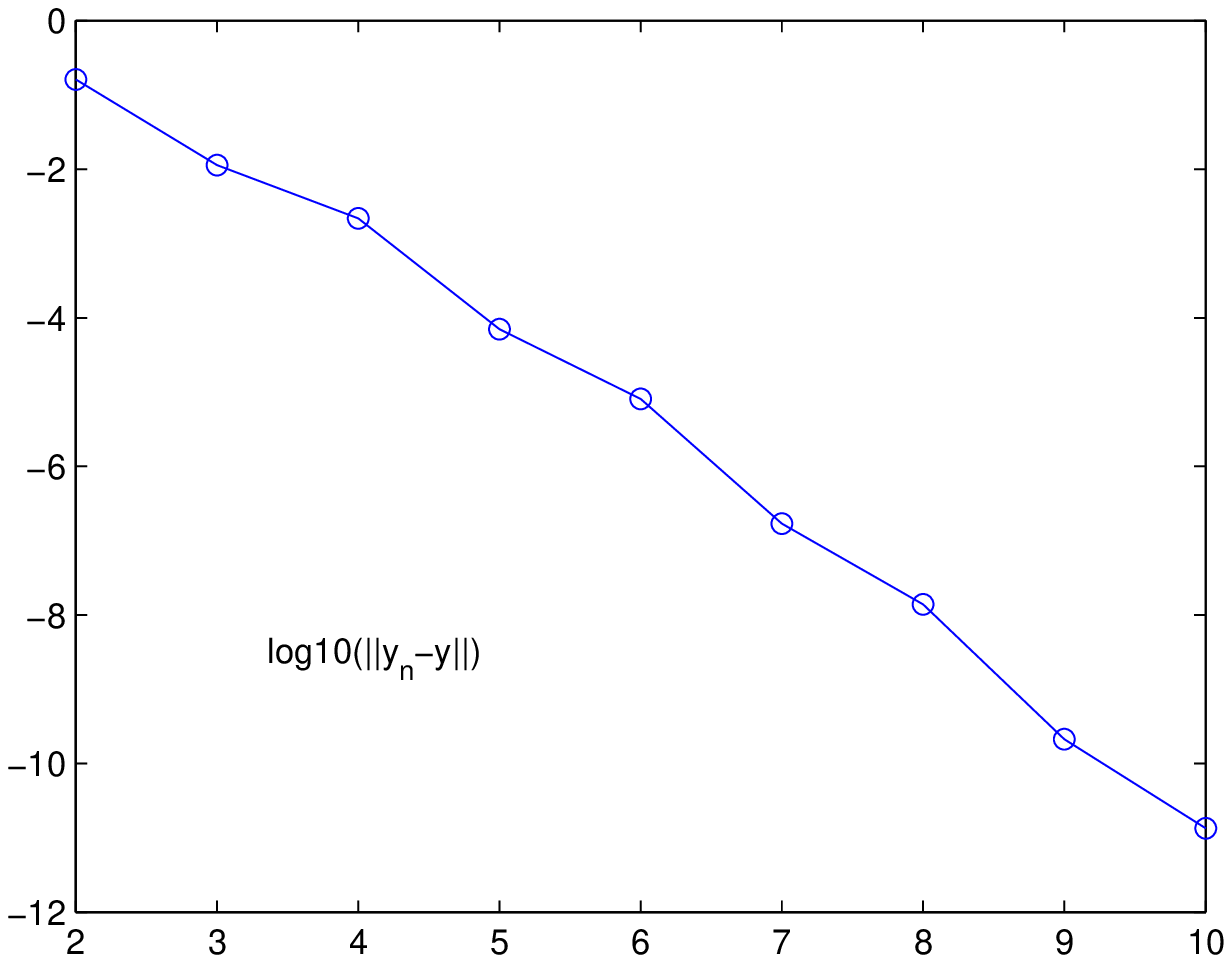}
 \caption{\small{plots of  convergence order by direct Galerkin method}}
\end{figure}
\begin{figure}
\includegraphics [width=2.4in,height=3.1in,]{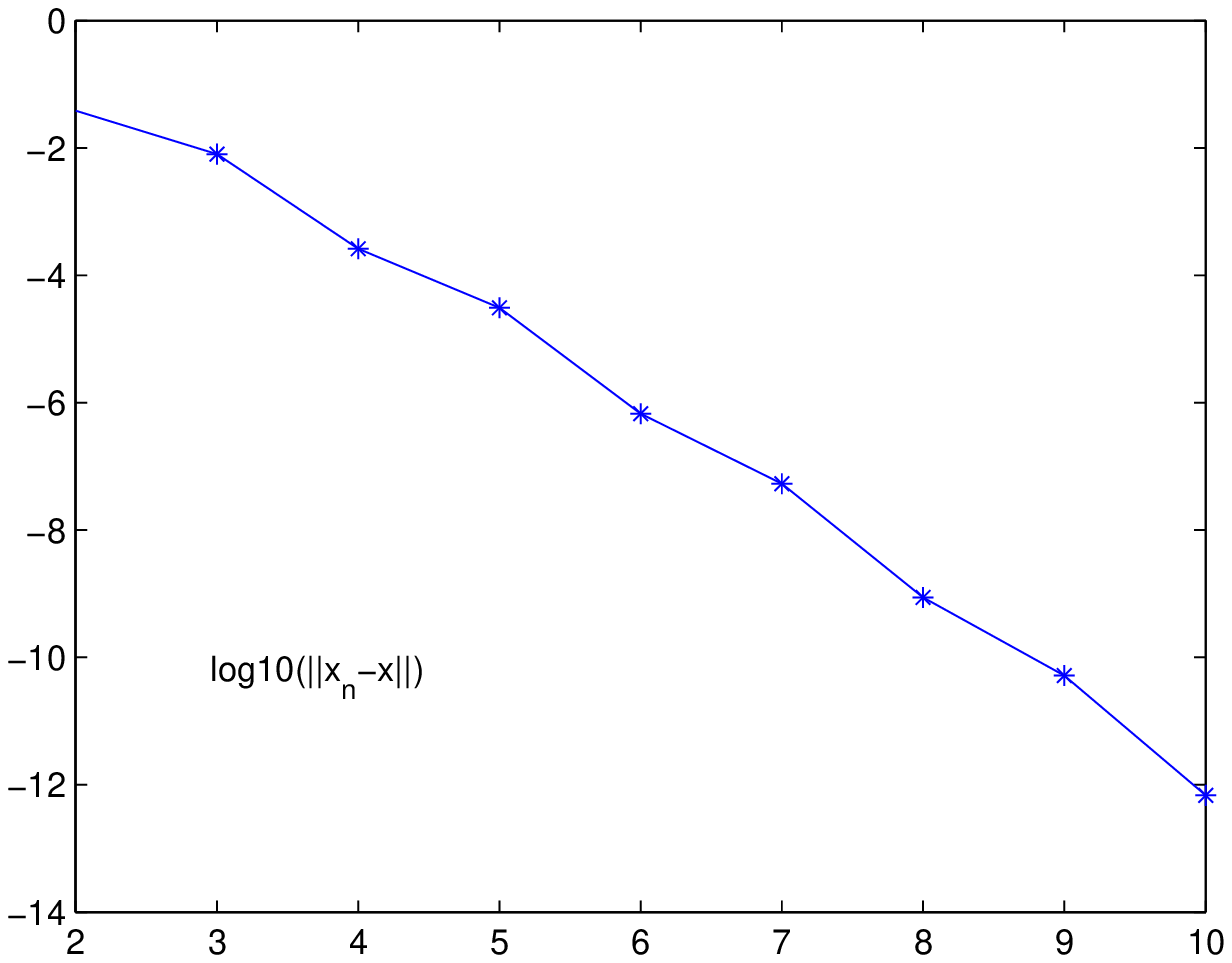}
\includegraphics [width=2.4in,height=3.1in,]{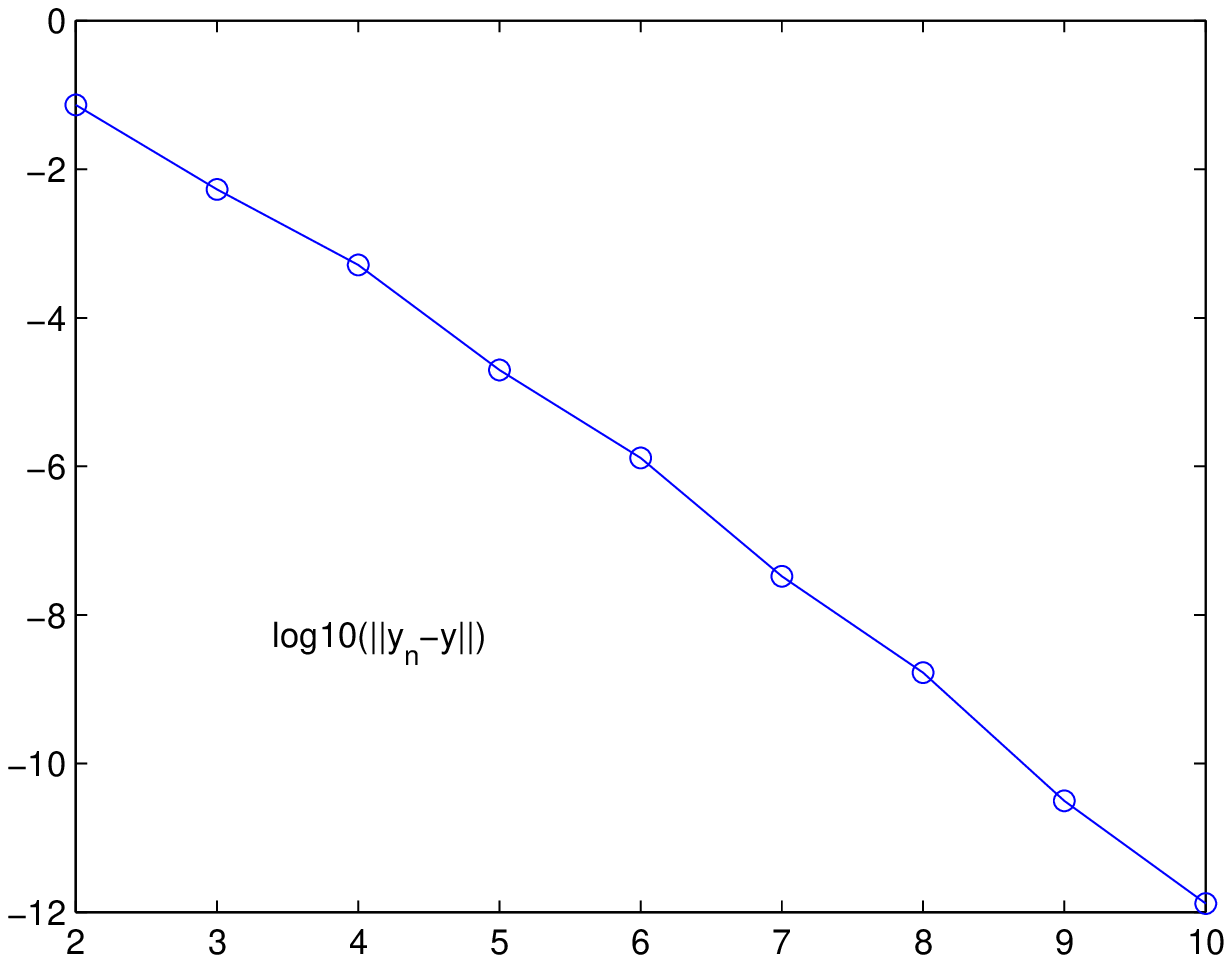}
 \caption{\small{plots of  convergence order by indirect Galerkin method.  We note that the slope of this figures is same as the slope of figure 3 as claimed}}
\end{figure}
\begin{figure}
\includegraphics [width=2.4in,height=3.1in,]{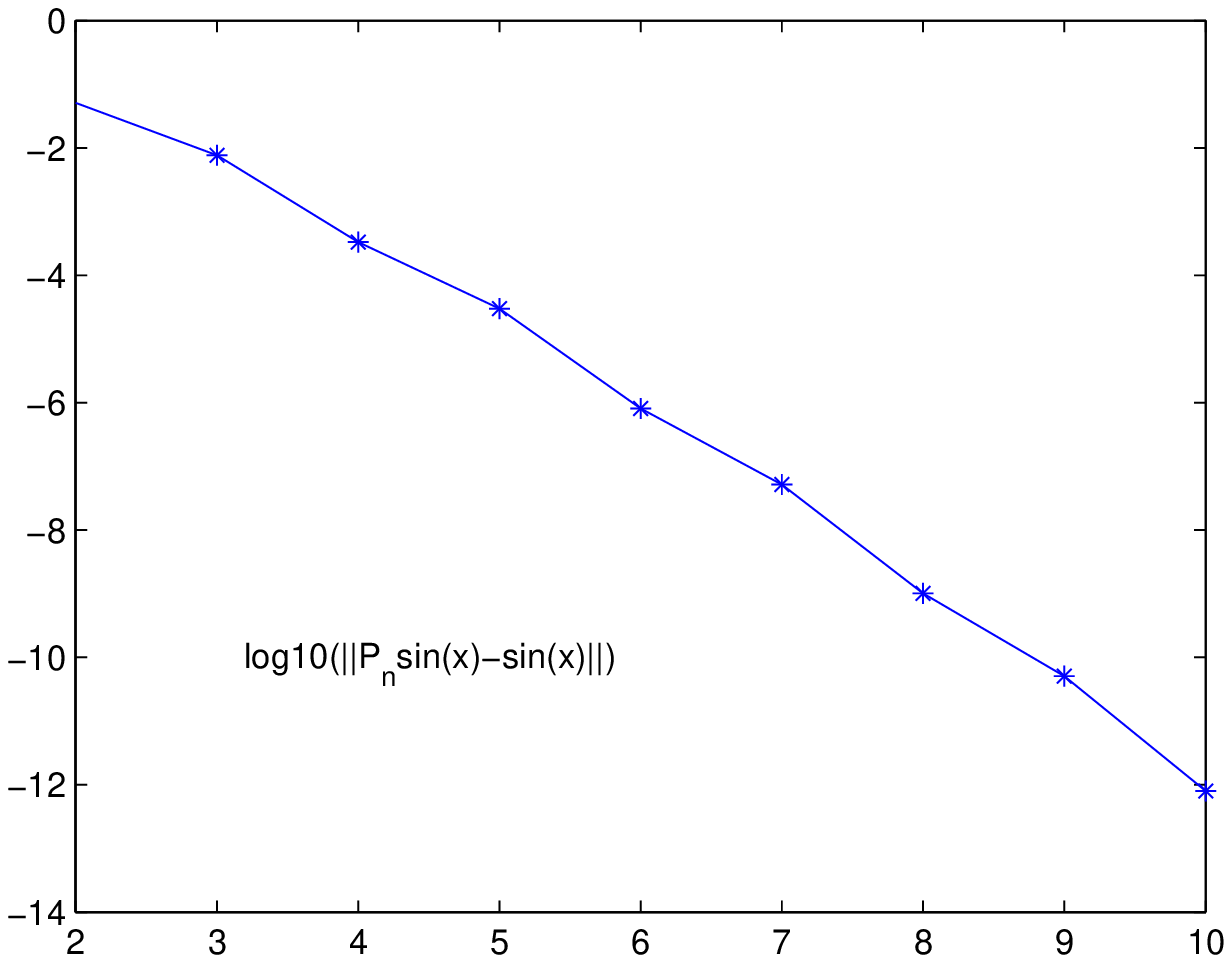}
\includegraphics [width=2.4in,height=3.1in,]{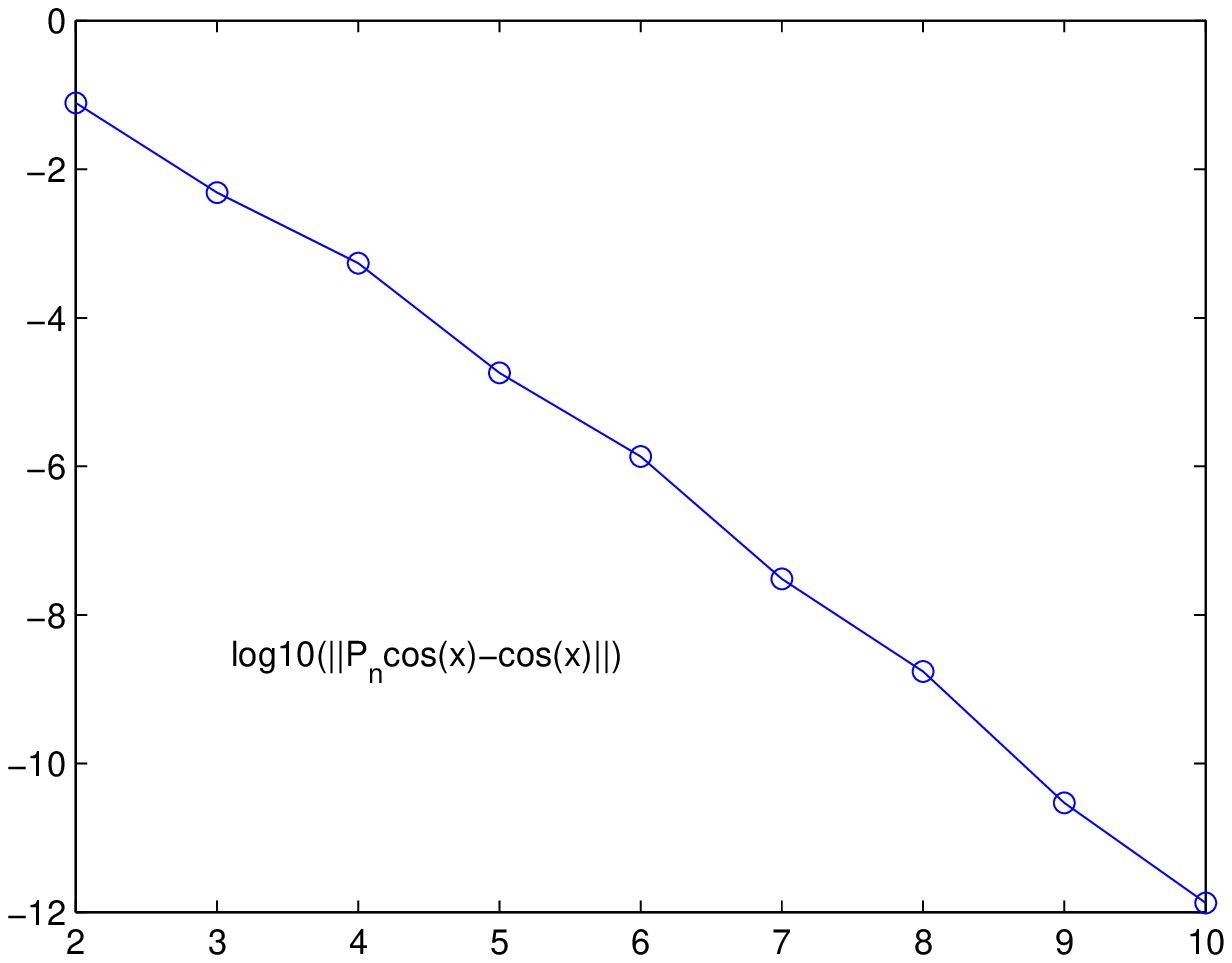}
 \caption{\small{plots of  convergence order of best the approximation}}
\end{figure}
In this section we illustrate the methods by numerical examples. \\
{\bf Example 1.}\\
We consider (\ref{e6}) with $k^{11}=s+t$, $k^{12}=s^2+t^2$,
$k^{21}=s-t^2$, $k^{31}=s+t+1$, $f_1=-t-2sin(t)t^2+2sin(t)$ and
$f_2=t^2-2sin(t)+cos(t)t-cos(t)t^2+1-cos(t)-2sin(t)t$ where the exact
solutions are $x=sin(t)$ and $y=cos(t)$. Numerical  results  obtained by two methods
are in table \ref{T5} and \ref{T6} which show  rapid
convergence. In these tables $\|f\|=\max_{T\in[0, T]}|f(t)|$.
In the following figures,  we  also illustrate $\log_{10}(\|x_n-x\|)$
and $\log_{10}(\|y_n-y\|)$ . For the second  system, we  expect
that this figure to be similar to the figure of
$\log_{10}(\|P_nx-x\|)$ and   $\log_{10}(\|P_ny-y\|)$. This
numerical experiment shows that  the second system is more accurate than the
first. But we need more computations in comparison with the first. \\
\begin{remark}
In equations (\ref{e3a}),(\ref{e3b}) and (\ref{e3c}), we find two
type  of integrals that should be computed, single and double integrals. The integral of type $\int_0^Tf(t)dt$ can be
computed by Gaussian Integration Methods (see \cite{11})
of the form
$$\int_{-1}^1f(t)dt\simeq\sum_{i=1}^nw_if(x_i)$$ and by changing of
variable we have
\begin{equation}\label{gass}
\int_0^Tf(t)dt\simeq\sum_{i=1}^nw_if(Tx_i/2+T/2)
\end{equation}
where $x_1, ..., x_n$ are the roots of $V_{T_(n+1)},$ and  $w_1, ..., w_n$
are computed by  the system expressed in (3.6.13) of \cite{11}. An approximate values of are computed as follows $\int_0^T\int_0^tf(t,s)dsdt$ in this  calculation is as it
follows
\begin{equation}\label{gass}
\int_0^T\int_0^tf(t,s)dsdt=\sum_{i=1}^n\sum_{j=1}^nw_iw_jf(\frac{Tx_i}{2}+\frac{T}{2},\frac{(\frac{Tx_i}{2}+\frac{T}{2})x_i}{2}+\frac{\frac{Tx_i}{2}+\frac{T}{2}}{2})
\end{equation}
Also we notice that the term $n$ is not fixed for different $P_m$,
indeed in our calculation $n=m$ that means for improving accuracy
of the solution we improve the accurate  approximate solutions of
integrals.
\end{remark}

\newpage

\end{document}